\newcommand*{\diam}{\mathrm{diam}}
\newcommand*{\radius}{\mathrm{rad}}
\newtheorem{thm}{Theorem}[section]
\newtheorem{prop}[thm]{Proposition}
\newtheorem{lem}[thm]{Lemma}
\title{Searching for square-complementary graphs:\\ non-existence results and complexity of recognition}
\author{Ratko Darda\\
\small Institut de Math\'ematiques de Jussieu -- Paris Rive Gauche (IMJ-PRG)\\
 \small  Universit\'e Paris Diderot, France\\
\small \texttt{ratko.darda@imj-prg.fr}\\
\and
Martin Milani\v c\\
\small University of Primorska, IAM, Muzejski trg 2, SI6000 Koper, Slovenia\\
\small University of Primorska, FAMNIT, Glagolja\v ska 8, SI6000 Koper, Slovenia\\
\small \texttt{martin.milanic@upr.si}
\and
Miguel Piza{\~n}a\\
\small Universidad Aut\'onoma Metropolitana, 09340 Mexico City, Mexico\\
\small \texttt{map@xanum.uam.mx}\\
\small \texttt{http://xamanek.izt.uam.mx/map}
}
\date{\today}
\begin{document}
\maketitle
\begin{abstract}
A graph is {\it square-complementary} ({\it squco}, for short) if its
square and complement are isomorphic. We prove that there are no squco
graphs with girth 6, that every bipartite graph is an induced subgraph
of a squco bipartite graph, that the problem of recognizing squco
graphs is graph isomorphism complete, and that no nontrivial squco
graph is both bipartite and planar. These results resolve three of the
open problems posed in Discrete Math. {\bf 327} (2014) 62--75.

\medskip
\noindent{\bf Keywords:} graph equation; squco graph; complement; square
\end{abstract}

\section{Introduction} \label{sec:introduction}

Given two graphs $G$ and $H$, we say that $G$ is the {\it square} of
$H$ (and denote this by $G = H^2$) if their vertex sets coincide and
two distinct vertices $x$, $y$ are adjacent in $G$ if and only if $x$,
$y$ are at distance at most two in $H$. Squares of graphs and their
properties are well-studied in literature (see, e.g., Section 10.6 in
the monograph~\cite{MR1686154}). A graph $G$ is said to be {\it
  square-complementary} ({\it squco} for short) if its square is
isomorphic to its complement. That is, $G^2\cong \overline{G}$, or,
equivalently, $G\cong \overline{G^2}$. The terminology
``square-complementary'' (``squco'') was suggested
in~\cite{MilPedPelVer}, however the problem of characterizing squco
graphs is much older; it was posed by Seymour Schuster at a conference
in 1980~\cite{Schuster}. Since then, squco graphs were studied in the
context of graph equations, which may in general involve a variety of
operators including the line graph and complement, see,
e.g.,~\cite{MR613401, MR1322267, MR1405844, MR725876, MR1018613,
  MR1379114}. The entire set of solutions of some of these equations
was found (see for example \cite{MR613401} and references quoted
therein). The set of solutions of the equation $G^2 \cong
\overline{G}$ remains unknown, despite several attempts to describe it
(see for example \cite{MR1322267, MR1405844, MilPedPelVer}). The
problem of determining all squco graphs was also posed as Open Problem
No.~36 in Prisner's book~\cite{MR1379114}.

Examples of squco graphs are $K_1$, $C_7$, and a cubic
vertex-transitive bipartite squco graph on $12$ vertices, known as the
Franklin graph (see Fig.~\ref{fig:FranklinsGraph}).

\begin{sloppypar}
Every nontrivial squco graph has diameter $3$ or $4$~\cite{MR1405844},
but it is not known whether a squco graph of diameter $4$ actually
exists. In~\cite{MilPedPelVer}, several other questions regarding
squco graphs were posed, and a summary of the known necessary
conditions for squco graphs was given.  Among them (see
Proposition~\ref{prop:girth-7}), it was proved that the $7$-cycle is
the only squco graph of girth\footnote{The {\it girth} of
  a graph $G$ is the length of a shortest cycle in $G$, or $\infty$ if
  $G$ is acyclic.} at least $7$. This result leaves only $5$ possible
values for the girth $g$ of a squco graph $G$, namely $g\in
\{3,4,5,6,7\}$.  The case $g = 7$ is completely characterized by
Proposition~\ref{prop:girth-7}.  Balti\'c et al.~\cite{MR1322267} and
Capobianco and Kim~\cite{MR1405844} asked whether there exists a squco
graph of girth $3$.  An affirmative answer to this question was
provided in~\cite{MilPedPelVer} by a squco graph on $41$ vertices with
a triangle (namely, the circulant $C_{41}(\{4, 5, 8, 10\})$). As shown
by the Franklin graph, there also exists a squco graph of girth $4$.
The questions regarding the existence of squco graphs of girth $5$ or
$6$ were left as open questions in~\cite{MilPedPelVer}.  In
Section~\ref{sec:girth}, we answer one of them, namely
Open~Problem~3~in~\cite{MilPedPelVer}, by proving that there is no
squco graph of girth $6$.  This leaves $g = 5$ as the only possible
value of $g$ for which the existence of a squco graph of girth $g$ is
unknown.
\end{sloppypar}

The {\it bipartite complement} of a bipartite graph $G$ with
bipartition $\{A,B\}$ is the graph $\overline G^{\textrm{bip}}$
obtained from $G$ by replacing $E(G)$ by $AB \setminus E(G)$ (where
$AB$ denotes the set of all pairs consisting of a vertex in $A$ and a
vertex in $B$). A bipartite graph $G$ with bipartition $\{A,B\}$ is
said to be {\it bipartite self-complementary} if it is isomorphic to
its bipartite complement.

Bipartite squco graphs were characterized in~\cite{MilPedPelVer} as
the bipartite graphs that are bipartite self-complementary and of
diameter 3 (see Theorem~\ref{thm:bipartite}). This easily produces
infinitely many bipartite squco graphs as shown in
Section~\ref{sec:construction} or as already shown in
\cite[Theorem 5.7]{MilPedPelVer}.

An infinite family of planar squco graphs is given by
$C_7[k,1,1,1,1,1,1]$ for $k\ge 1$, where $C_7[k,1,1,1,1,1,1]$ denotes
the graph obtained from $C_7$ by replacing one vertex $v$ of $C_7$
with an independent set of $k$ new vertices and joining these $k$ new
vertices with an edge precisely to the vertices adjacent to $v$ in
$C_7$.

In Section~\ref{sec:planar}, we prove that no nontrivial squco graph
is both bipartite and planar.

Furthermore, In Section~\ref{sec:construction} we prove that every
bipartite graph is an induced subgraph of a bipartite squco
graph. This implies that squco graphs can contain arbitrarily long
induced paths and cycles, thus solving in particular Open Problem 8(2)
in \cite{MilPedPelVer} (which asked whether squco graphs can contain
arbitrarily long induced paths). We also show that the problem of
recognizing squco graphs is graph isomorphism complete, which solves
Open Problem 10 in \cite{MilPedPelVer}.

\section{Preliminaries} \label{sec:preliminaries}

We use standard graph terminology~\cite{MR2744811}. We briefly recall
some useful definitions.  Given two vertices $u$ and $v$ in a
connected graph $G$, we denote by $d_G(u,v)$ the distance in $G$
between $u$ and $v$ (that is, the number of edges in a shortest
$u$,$v$-path). For a positive integer $i$, we denote by $N_i(v,G)$
the set of all vertices $u$ in $G$ such that $d_G(u,v)= i$, and by
$N_{\ge i}(v,G)$ the set of all vertices $u$ in $G$ such that
$d_G(u,v)\ge i$.

For the reader's convenience, we transcribe here the known results
that we shall need.

\begin{prop}\label{prop:cut-vertex}
\textup{\cite{MR1322267}} Every squco graph is connected and has no
cut vertices.
\end{prop}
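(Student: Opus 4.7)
The plan is to prove the two assertions by showing that each form of disconnection in $G$ creates an incompatibility between $G^2$ and $\overline G$ via the defining isomorphism.

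\textbf{Connectedness.} Suppose $|V(G)|\ge 2$ and, for contradiction, that $G$ is disconnected. Squaring can only add edges between vertices already in the same component of $G$, so $G^2$ is disconnected. On the other hand, $\overline G$ is connected: two vertices in different components of $G$ are already adjacent in $\overline G$, and two vertices in the same component of $G$ share, as a common $\overline G$-neighbor, any vertex of another component. Hence $G^2\not\cong\overline G$, a contradiction.

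\textbf{No cut vertex.} Suppose, for contradiction, that $v$ is a cut vertex of $G$, with $G-v$ having components $C_1,\ldots,C_k$ ($k\ge 2$). First, if $v$ is a universal vertex of $G$, then $v$ is isolated in $\overline G$ but not in $G^2$ (because $G$ is connected with $n\ge 2$), immediately contradicting $G^2\cong\overline G$. Otherwise, I would fix some $u\in C_i\setminus N_G(v)$. The key observation is that every vertex at $G$-distance at most $2$ from $u$ lies in $C_i\cup\{v\}$: any $G$-neighbor of $u$ belongs to $C_i$, and in any length-two walk $u$-$y$-$x$ the middle vertex $y$ lies in $C_i$, so $x\in C_i\cup\{v\}$. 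Hence the component of $u$ in $G^2-v$ is contained in $C_i$, so $G^2-v$ is disconnected; that is, $v$ is a cut vertex of $G^2$ as well.

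It then remains to turn the existence of this cut vertex into a contradiction with $G^2\cong\overline G$. Under an isomorphism $\phi\colon G^2\to\overline G$, the vertex $w:=\phi(v)$ is a cut vertex of $\overline G$, which in $G$ unfolds as a partition $V(G)\setminus\{w\}=X\sqcup Y$ with $X,Y\ne\emptyset$ and $X\times Y\subseteq E(G)$. A short case analysis on the location of $w$ closes the argument. If $w=v$, the biclique $X\times Y$ would consist entirely of edges crossing distinct $C_j$'s in $G-v$, of which there are none. If $w\ne v$ and, without loss of generality, $v\in X$, then $Y\subseteq N_G(v)$, and for every $x\in X\setminus\{v\}$ lying in some $C_j$ one has $Y\subseteq N_G(x)\cup\{v\}\subseteq C_j\cup\{v\}$; juggling these inclusions across the different $C_j$'s collapses the configuration to $X=\{v\}$, $Y=V(G)\setminus\{v,w\}$, so $v$ is adjacent to every vertex of $G$ except possibly $w$. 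Using the complete bipartite structure $X\times Y\subseteq E(G)$ it is then easy to check $\deg_{G^2}(v)=n-1$; matching such a universal vertex in $\overline G$ would demand an isolated vertex of $G$, contradicting connectedness.

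The main obstacle I expect is this final case, where the cut vertex produced by $\phi$ does not by itself give a contradiction: one must extract a quantitative degree mismatch, leveraging both the non-existence of cross-$C_j$ edges in $G-v$ and the connectedness of $G$ established in the first part.
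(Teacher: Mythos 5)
The paper does not actually prove this proposition: it is transcribed from \cite{MR1322267} as a known result, so there is no in-paper proof to measure yours against. On its own merits, your connectedness argument is correct and standard (the square of a disconnected graph is disconnected, while the complement of a disconnected graph is connected), and so is your disposal of a universal cut vertex.

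The rest of the cut-vertex half fails, however. From the correct observation that $N_{G^2}[u]\subseteq C_i\cup\{v\}$ you infer that the component of $u$ in $G^2-v$ is contained in $C_i$, hence that $v$ is a cut vertex of $G^2$. This does not follow, and in fact the opposite always holds: for a connected graph $G$ and \emph{any} vertex $v$, the graph $G^2-v$ is connected. Indeed, every component $C_j$ of $G-v$ contains a neighbour of $v$, every $C_j$ induces a connected subgraph of $G^2-v$ (it already does so in $G-v$), and any two neighbours of $v$ are at $G$-distance at most $2$, hence adjacent in $G^2-v$; so the clique on $N_G(v)$ ties all the $C_j$ together. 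Concretely, in $G=P_5$ with vertices $a,b,v,d,e$ in order, $v$ is a cut vertex of $G$ but $G^2-v$ is the path $a\,b\,d\,e$. The flaw in your deduction is that a $G^2$-path from $u$ may exit $C_i$ through a vertex of $C_i\cap N_G(v)$, whose own $G^2$-neighbourhood reaches into the other components. Since the cut vertex $w=\phi(v)$ of $\overline G$ on which your entire final case analysis rests does not exist, everything after the second paragraph of that half collapses; ruling out cut vertices needs a genuinely different mechanism, for instance a degree or distance count exploiting $G\cong\overline{G^2}$, in which adjacency means $d_G\ge 3$.
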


\begin{prop}\label{prop:radius-diameter}
\textup{\cite{MR1322267,MR1405844}} If $G$ is a nontrivial squco graph,
then we have $\radius(G) = 3$ and  $3\le \diam(G)\le 4\,.$ Moreover, if $G$ is
regular, then $\diam(G) = 3$.
\end{prop}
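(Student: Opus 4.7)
My plan is to treat the three assertions---$3 \leq \diam(G) \leq 4$, $\radius(G) = 3$, and ``$G$ regular $\Rightarrow \diam(G) = 3$''---using (a) the folklore identities $\diam(G^2) = \lceil \diam(G)/2 \rceil$ and $\radius(G^2) = \lceil \radius(G)/2 \rceil$, which follow from $d_{G^2}(u,v) = \lceil d_G(u,v)/2 \rceil$; (b) the classical inequality that $\diam(G) \geq 4$ forces $\diam(\overline{G}) \leq 2$; and (c) the key degree identity
\[
|N_{\ge 3}(u,G)| = \deg_G(\phi(u)) \quad \text{for every } u \in V(G),
\]
where $\phi \colon G^2 \to \overline{G}$ is a fixed isomorphism. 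Identity (c) follows from $|N_{\ge 3}(u,G)| = n-1-\deg_{G^2}(u)$, $\deg_{G^2}(u) = \deg_{\overline{G}}(\phi(u))$, and $\deg_{\overline{G}}(w) = n-1-\deg_G(w)$. The main obstacle is ruling out $\radius(G) = 4$, for which identity (c) is decisive; the other steps are routine.

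For the diameter bounds: if $\diam(G) \leq 2$, then $G^2 = K_n$ and hence $\overline{G} \cong K_n$, forcing $G$ to be edgeless and contradicting connectedness (Proposition~\ref{prop:cut-vertex}) for $n \geq 2$. If $\diam(G) \geq 5$, then $\diam(G^2) \geq 3 > 2 \geq \diam(\overline{G})$, contradicting $G^2 \cong \overline{G}$. Hence $\diam(G) \in \{3,4\}$, which in particular gives $\diam(\overline{G}) = \diam(G^2) = 2$.

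For $\radius(G) = 3$: since $G$ has no isolated vertex, $\overline{G}$ has no universal vertex, so $\radius(\overline{G}) \geq 2$; combined with $\radius(\overline{G}) \leq \diam(\overline{G}) = 2$, we get $\radius(\overline{G}) = 2 = \radius(G^2) = \lceil \radius(G)/2 \rceil$, forcing $\radius(G) \in \{3,4\}$. To eliminate $\radius(G) = 4$, I assume this; then every vertex of $G$ has eccentricity exactly $4$, so $\diam(G) = 4$ and $N_4(v,G) \neq \emptyset$ for every $v$. Choose $v_0$ of maximum degree $\Delta$ and pick $u_0 \in N_4(v_0,G)$. For any neighbor $w$ of $v_0$, the triangle inequality and $\diam(G) = 4$ give $3 \leq d_G(u_0,w) \leq 4$, so $N_G(v_0) \subseteq N_{\ge 3}(u_0,G)$. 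Since $v_0 \in N_{\ge 3}(u_0,G) \setminus N_G(v_0)$, we conclude
\[
\Delta = \deg_G(v_0) \leq |N_{\ge 3}(u_0,G)| - 1 = \deg_G(\phi(u_0)) - 1 \leq \Delta - 1\,,
\]
the desired contradiction.

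The regularity implication drops out of the same counting. Suppose $G$ is $k$-regular and $\diam(G) = 4$; take any $v_0, u_0$ with $d_G(v_0, u_0) = 4$. Then $|N_{\ge 3}(u_0,G)| = \deg_G(\phi(u_0)) = k$ by $k$-regularity, whereas the inclusion $N_G(v_0) \cup \{v_0\} \subseteq N_{\ge 3}(u_0,G)$ forces $k = \deg_G(v_0) \leq k - 1$, a contradiction; hence $\diam(G) = 3$ when $G$ is regular.
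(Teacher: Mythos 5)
Proposition~\ref{prop:radius-diameter} is one of the results the paper merely \emph{transcribes} from the literature (\cite{MR1322267,MR1405844}); the paper contains no proof of it, so there is nothing internal to compare your argument against line by line. Judged on its own, your proof is correct and self-contained modulo Proposition~\ref{prop:cut-vertex} (connectedness), which the paper also quotes without proof. All three ingredients check out: the identity $d_{G^2}(u,v)=\lceil d_G(u,v)/2\rceil$ and its consequences for diameter and radius (note that $\min$ and $\max$ commute with the non-decreasing map $t\mapsto\lceil t/2\rceil$, which is what makes the radius version legitimate); the fact that $\diam(G)\ge 4$ forces $\diam(\overline G)\le 2$; and the degree identity $|N_{\ge 3}(u,G)|=\deg_G(\phi(u))$, which is a clean way to package the counting. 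The elimination of $\radius(G)=4$ is the one place where care is needed, and your argument is sound: radius $4$ forces every vertex, in particular a maximum-degree vertex $v_0$, to have eccentricity $4$, and then $N_G(v_0)\cup\{v_0\}\subseteq N_{\ge 3}(u_0,G)$ for $u_0\in N_4(v_0,G)$ yields $\Delta\le\deg_G(\phi(u_0))-1\le\Delta-1$. The same inclusion with $k$-regularity gives $k\le k-1$ and disposes of the regular case. One presentational quibble: the upper bound $d_G(u_0,w)\le 4$ in your chain $3\le d_G(u_0,w)\le 4$ is never used; only the lower bound matters for membership in $N_{\ge 3}(u_0,G)$.
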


\begin{prop}\label{prop:girth-7}
\textup{\cite[Proposition 3.6]{MilPedPelVer}} If $G$ is a nontrivial
squco graph with girth at least~$7$, then $G$ is the $7$-cycle.
\end{prop}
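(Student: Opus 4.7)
My plan is to show that $G$ must be regular, and then to apply the Moore bound for girth~$7$ to conclude $G = C_7$. Writing $n = |V(G)|$ and $\Delta, \delta$ for the maximum and minimum degrees of $G$, I would begin by extracting a useful degree identity from the squco condition: any isomorphism $\phi\colon V(G^2) \to V(\overline{G})$ yields, for every vertex $v$,
\[
|N_{\ge 3}(v,G)| \;=\; n - 1 - |N_1(v,G)| - |N_2(v,G)| \;=\; d_G(\phi(v)),
\]
so $|N_{\ge 3}(v,G)|$ is always a realized vertex-degree, in particular at most $\Delta$. The hypothesis girth~$\ge 7$, via a short case analysis of forbidden short cycles, implies that the ball of radius~$3$ around any vertex $v$ is a tree, that each $b \in N_2(v)$ has all its non-predecessor neighbors in $N_3(v)$, and that distinct vertices of $N_2(v)$ share no neighbor in $N_3(v)$. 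Consequently the counting formulas
\[
|N_2(v)| = \sum_{u \in N(v)}(d(u)-1), \qquad |N_3(v)| = \sum_{b \in N_2(v)}(d(b)-1)
\]
hold exactly.

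For the regularity step, I would first invoke Proposition~\ref{prop:cut-vertex} to get $\delta \ge 2$, then assume for contradiction that $\Delta > \delta$ and pick $v$ of degree $\Delta \ge 3$. The formulas above together with $d(u), d(b) \ge \delta$ force $|N_3(v)| \ge \Delta(\delta - 1)^2$; combined with $|N_{\ge 3}(v)| \le \Delta$, this yields $\delta = 2$ and forces all intermediate inequalities to be equalities. This pins down $|N_1(v)| = |N_2(v)| = |N_3(v)| = \Delta$, $|N_{\ge 4}(v)| = 0$, $n = 3\Delta + 1$, and that every vertex in $N_1(v) \cup N_2(v)$ has degree~$2$. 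Applying the same identity to a neighbor $u$ of $v$ (whose two neighbors are $v$ of degree $\Delta$ and some $b$ of degree~$2$) gives $|N_2(u)| = (\Delta-1)+1 = \Delta$ and
\[
|N_{\ge 3}(u,G)| \;=\; (3\Delta + 1) - 1 - 2 - \Delta \;=\; 2\Delta - 2,
\]
which for $\Delta \ge 3$ strictly exceeds $\Delta$, contradicting the bound $|N_{\ge 3}(u,G)| \le \Delta$. Therefore $G$ must be $d$-regular.

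In the $d$-regular case the identity specializes to $n = d^2 + d + 1$, whereas the Moore bound for girth~$\ge 7$ reads $n \ge 1 + d + d(d-1) + d(d-1)^2 = d^3 - d^2 + d + 1$. Subtracting gives $d^2(d - 2) \le 0$, so $d \le 2$; the cases $d \in \{0,1\}$ are ruled out (by Proposition~\ref{prop:cut-vertex} and the easy observation that $K_2$ is not squco), so $d = 2$ and $n = 7$, leaving $G = C_7$ as the unique connected $2$-regular graph on $7$ vertices. The main obstacle will be the regularity step: it requires using the girth hypothesis tightly enough to turn the BFS lower bounds into exact equalities, and it relies crucially on the fact that $|N_{\ge 3}(v)|$ must be \emph{realized} as some vertex-degree of $G$, not merely bounded above by $n - 1$.
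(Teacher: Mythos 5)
The paper does not actually prove Proposition~\ref{prop:girth-7}: it is transcribed verbatim from \cite[Proposition 3.6]{MilPedPelVer} and used as a black box, so there is no in-paper argument to compare yours against. Judged on its own, your proof is correct and complete. The central observation --- that an isomorphism $\phi\colon G^2\to\overline{G}$ forces $|N_{\ge 3}(v,G)|=d_G(\phi(v))$, hence $|N_{\ge 3}(v,G)|\le\Delta(G)$ --- is valid, and the girth-$7$ hypothesis does give exactly the BFS-layer identities you state (unique predecessors in $N_2$ and $N_3$, no edges inside $N_1$ or $N_2$), which is all you use; your passing claim that the radius-$3$ ball is a tree is slightly too strong (false already for $C_7$ itself), but harmless since the precise facts you list afterwards are the ones the counting relies on. The non-regularity contradiction ($\delta=2$ forced, $n=3\Delta+1$, then $|N_{\ge 3}(u)|=2\Delta-2>\Delta$ for a degree-$2$ neighbour $u$) checks out, as does the regular case; note that in the regular case you could shortcut either by citing Proposition~\ref{thm:8.4.2} once $d\le 2$ is known, or by using your own identity $|N_3(v)|=d(d-1)^2\le d$ directly instead of invoking the Moore bound. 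The one point worth making explicit is the $n=2$ caveat: Proposition~\ref{prop:cut-vertex} gives $\delta\ge 2$ only for $n\ge 3$, so the separate remark that $K_2$ is not squco is genuinely needed and should be stated up front rather than only in the final case split.
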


\begin{prop}\label{thm:8.4.2}
\textup{\cite[Proposition 4.2]{MilPedPelVer}}
The only non-trivial squco graph with maximum degree at most 2 is $C_7$.
\end{prop}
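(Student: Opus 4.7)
The plan is to reduce $G$ to a cycle and then pin down its length using regularity together with Proposition~\ref{prop:radius-diameter}.

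First, I would observe that a graph with maximum degree at most $2$ is a disjoint union of paths and cycles. By Proposition~\ref{prop:cut-vertex}, a nontrivial squco graph is connected and $2$-connected (no cut vertex). A connected graph of maximum degree at most $2$ is either a path or a cycle, and among these only $K_1$, $K_2$ and the cycles have no cut vertex. A quick sanity check rules out $K_2$: its square equals itself, while its complement is $2K_1$. Hence $G$ must be a cycle $C_n$ for some $n \ge 3$.

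Next I would apply Proposition~\ref{prop:radius-diameter}. Since $C_n$ is $2$-regular, it is regular, so $\diam(G) = 3$. For $C_n$ the diameter equals $\lfloor n/2 \rfloor$, forcing $n \in \{6,7\}$.

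It then remains to compare $C_n^2$ with $\overline{C_n}$ for these two values. For $n=6$, every vertex of $C_6^2$ has degree $4$ while every vertex of $\overline{C_6}$ has degree $n-3=3$, so the graphs are not even degree-compatible. For $n=7$, both $C_7^2$ and $\overline{C_7}$ are $4$-regular on $7$ vertices; I would make the isomorphism explicit by exhibiting them as the circulants $C_7(\{1,2\})$ and $C_7(\{2,3\})$ respectively, and noting that multiplication by $2 \pmod 7$ maps the connection set $\{1,2\}$ to $\{2,4\} \equiv \{2,3\} \pmod 7$ (using $4 \equiv -3$). This verifies $C_7^2 \cong \overline{C_7}$ and finishes the proof.

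The only real obstacle is the final isomorphism check, but it reduces to a one-line computation with circulants; everything else is structural and follows immediately from the two previously stated propositions.
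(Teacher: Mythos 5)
Your proof is correct. Note that the paper does not prove this proposition at all --- it is transcribed verbatim as a known result, citing Proposition~4.2 of~\cite{MilPedPelVer} --- so there is no in-paper argument to compare against; judged on its own, your chain of reasoning is sound and complete: Proposition~\ref{prop:cut-vertex} reduces $G$ to a cycle (after the easy elimination of $K_2$), the regularity clause of Proposition~\ref{prop:radius-diameter} together with $\diam(C_n)=\lfloor n/2\rfloor$ forces $n\in\{6,7\}$, the degree count $4\neq 3$ kills $C_6$, and the circulant computation $2\cdot\{ \pm 1,\pm 2\}=\{\pm 2,\pm 4\}=\{\pm 2,\pm 3\} \pmod 7$ correctly certifies $C_7^2\cong\overline{C_7}$.
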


A graph is \emph{subcubic} if it has maximum degree at most 3.

\begin{prop}\label{thm:8.4.7}
\textup{\cite[Proposition 4.7]{MilPedPelVer}}
The only subcubic squco graph on~12 vertices is the Franklin graph
(see Fig.~\ref{fig:FranklinsGraph}).
\end{prop}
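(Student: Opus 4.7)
The plan is to first derive rigid structural constraints on $G$—namely, that $G$ is $3$-regular, triangle-free, of diameter $3$, with $|N_2(v)|=5$ and $|N_3(v)|=3$ for every vertex $v$—and then to identify $G$ as the Franklin graph by exploiting a canonical ``partner'' involution forced by the squco equation.

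Since $|V(G)|=12\neq 7$, Proposition~\ref{thm:8.4.2} rules out $\Delta(G)\le 2$, so $\Delta(G)=3$ and $\overline G$ has minimum degree $11-3=8$. As $G^2\cong\overline G$, the same lower bound applies in $G^2$. For any vertex $v$, the inequality $\deg_{G^2}(v)=|N_1(v)|+|N_2(v)|\le \deg_G(v)+\deg_G(v)(\Delta(G)-1)=3\deg_G(v)$, combined with $\deg_{G^2}(v)\ge 8$, forces $\deg_G(v)=3$ for every $v$. Hence $G$ is $3$-regular, and Proposition~\ref{prop:radius-diameter} gives $\diam(G)=3$. Then $\overline G$ is $8$-regular, so $G^2$ is $8$-regular and $|N_1(v)|+|N_2(v)|=8$ for every $v$, yielding $|N_2(v)|=5$ and $|N_3(v)|=3$.

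For a fixed $v$, the three vertices of $N_1(v)$ have in total $2\cdot 3=6$ non-$v$ edge-endpoints, and these land in $N_1(v)\cup N_2(v)$; writing $e_v$ and $f_v$ for the number of edges within $N_1(v)$ and between $N_1(v)$ and $N_2(v)$, one has $2e_v+f_v=6$. Every vertex of $N_2(v)$ has at least one neighbor in $N_1(v)$, so $f_v\ge 5$, forcing $e_v=0$ and $f_v=6$. Hence $N_1(v)$ is independent (so $G$ is triangle-free), and the six edges from $N_1(v)$ to $N_2(v)$ distribute as $(2,1,1,1,1)$. In particular each $v$ admits a unique \emph{partner} $v^*\in N_2(v)$ with $|N(v)\cap N(v^*)|=2$, and $v\mapsto v^*$ is a fixed-point-free involution pairing $V(G)$ into six twin pairs; the two common neighbors of $\{v,v^*\}$ form a $4$-cycle with them, so $G$ has girth $4$.

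What remains, and is the main obstacle, is to identify $G$ as the Franklin graph. Fixing a twin pair $\{v,v^*\}$ with common neighbors $a,b$, writing $N(v)=\{a,b,c\}$ and $N_2(v)\setminus\{v^*\}=\{x_1,x_2,x_3,x_4\}$, the forced local pattern $a\sim x_1$, $b\sim x_2$, $c\sim x_3,x_4$, together with the analogous structure at $v^*$, at $a,b,c$, and at their partners, should propagate through the graph and either yield a contradiction or reconstruct the Franklin adjacency. The bookkeeping is delicate: one must track how the partner involution interacts with the three distance layers of every vertex and how the three vertices of $N_3(v)$ connect back into $N_2(v)$. A natural shortcut is first to establish that $G$ is bipartite (the rigid distance-and-twin structure should preclude odd cycles, with $5$-cycles eliminated by a local count at the neighborhood of a hypothetical $5$-cycle) and then appeal to the known uniqueness of the Franklin graph among $3$-regular bipartite graphs on $12$ vertices with diameter $3$ and exactly three $4$-cycles.
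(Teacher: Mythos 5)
The paper does not prove this proposition itself: it is transcribed verbatim from \cite[Proposition 4.7]{MilPedPelVer} as a known prerequisite, so there is no in-paper proof to compare against. Your attempt therefore has to be judged on its own, and it has a genuine gap: it stops before the statement is actually established.

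The first part of your argument is correct and cleanly done. The degree bound $\deg_{G^2}(v)\le \Delta(G)\cdot\deg_G(v)$ against $\delta(\overline G)\ge 8$ does force $3$-regularity; Proposition~\ref{prop:radius-diameter} then gives $\diam(G)=3$, and the counts $|N_2(v)|=5$, $|N_3(v)|=3$, the independence of $N_1(v)$ (hence triangle-freeness), the $(2,1,1,1,1)$ edge distribution, and the fixed-point-free partner involution all follow as you say. But everything after ``What remains, and is the main obstacle'' is a plan, not a proof. You never carry out the propagation of the local pattern, you assert without argument that $5$-cycles ``should'' be excluded by ``a local count'' that is not exhibited, and the final step appeals to ``the known uniqueness of the Franklin graph among $3$-regular bipartite graphs on $12$ vertices with diameter $3$ and exactly three $4$-cycles'' --- this is not a standard citable fact, and establishing it is essentially the same case analysis you are trying to avoid (there are many cubic bipartite graphs on $12$ vertices of girth $4$, e.g.\ the hexagonal prism, and one must genuinely check which of them survive your constraints). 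Note also that the proposition asserts that the Franklin graph \emph{is} squco, and your argument never verifies this existence direction. As written, the proposal derives necessary conditions but does not identify $G$; the identification is precisely the hard part, and it is missing.
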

\begin{figure}[!ht]
  \begin{center}
\includegraphics[height=30mm]{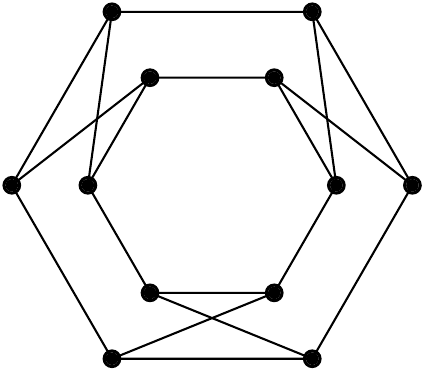}
  \end{center}
  \caption{The Franklin graph.}
\label{fig:FranklinsGraph}
\end{figure}

Given a graph $G$ with vertices labeled $v_1, v_2, \ldots, v_n$ and
positive integers $k_1, k_2, \ldots, k_n$, we denote by $G[k_1, k_2,
  \ldots, k_n]$ the graph obtained from $G$ by replacing each vertex
$v_i$ of $G$ with a set $U_i$ of $k_i$ (new) vertices and joining
vertices $u_i\in U_i$ and $u_j\in U_j$ with an edge if and only if
$v_i$ and $v_j$ are adjacent in $G$.

\begin{thm}\label{thm:8.4.8}
\textup{\cite[Theorem 4.8]{MilPedPelVer}} Let $G$ be a graph with at
most 11 vertices. Then, $G$ is squco if and only if $G$ is one of the
following eight graphs:\vspace{0.2cm}\\
\hspace*{0.5cm}$K_1$,\hspace{0.4cm}$C_7$,\hspace{0.4cm}$C_7[2,1,1,1,1,1,1]$,\hspace{0.4cm}$C_7[3,1,1,1,1,1,1]$,\hspace{0.4cm}$C_7[4,1,1,1,1,1,1]$,\vspace{0.2cm}\\
 \hspace*{1.2cm}$C_7[1,2,1,2,2,1,1]$,\hspace{0.4cm}$C_7[5,1,1,1,1,1,1]$,\hspace{0.4cm}$C_7[2,1,1,2,1,2,2]$.
\end{thm}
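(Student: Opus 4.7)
The plan is to prove both directions separately. For the sufficiency direction I would directly verify that each of the eight listed graphs is squco. The cases $K_1$ and $C_7$ are immediate: $C_7^2$ is the circulant $C_7(\{1,2\})$ whose complement is $C_7(\{3\}) \cong C_7$. For each of the six graphs of the form $C_7[k_1,\ldots,k_7]$, let $U_1,\ldots,U_7$ denote the blow-up sets. One checks that, for distinct vertices $x \in U_i$ and $y \in U_j$, adjacency in $G^2$ depends only on $d_{C_7}(v_i,v_j)$ and whether $i=j$, and likewise for $\overline{G}$. This reduces the verification to comparing an explicit pair of ``type-structures'' on the vertex sets and exhibiting a bijection respecting them, which is a short combinatorial check for each of the six tuples.

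For the necessity direction I would apply the constraints already collected: by Propositions~\ref{prop:cut-vertex}--\ref{thm:8.4.2}, a nontrivial squco graph is $2$-connected, has $\radius(G)=3$ and $\diam(G)\in\{3,4\}$, and must have a vertex of degree at least $3$ unless $G=C_7$. I would combine these with the numerical identities forced by $G^2\cong\overline{G}$, namely $|E(G)|+|E(G^2)|=\binom{n}{2}$ and the multiset equality $\{\deg_{G^2}(v):v\in V(G)\}=\{n-1-\deg_G(v):v\in V(G)\}$. For $n\le 6$ no $2$-connected graph attains radius~$3$, so $K_1$ is the only squco graph in this range. For $n=7$, Proposition~\ref{thm:8.4.2} handles maximum degree~$\le 2$, and a finite case analysis on $2$-connected graphs of radius~$3$ on seven vertices with a vertex of degree~$\ge 3$ shows that none is squco.

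For $8\le n\le 11$ the argument is the substantive part. My approach would be to first show that any such squco graph $G$ contains an induced $C_7$: the edge-count and diameter constraints eliminate small-girth dense graphs, and once girth and radius~$3$ are in place, the shortest odd cycle or a BFS layering from an eccentric vertex produces an induced $C_7$. Next, I would argue that each of the remaining $n-7$ vertices is a false twin of a vertex on this $C_7$, so that $G\cong C_7[k_1,\ldots,k_7]$ with $\sum k_i=n\le 11$. Finally, I would enumerate the finitely many admissible tuples and apply the explicit squco test from the first paragraph, retaining exactly the six tuples listed.

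The main obstacle is the structural step in the last paragraph: showing that every squco graph on $8$ to $11$ vertices is a blow-up of $C_7$. This is where the propositions above do not obviously suffice, and the rigidity must be extracted from the identity $\{\deg_{G^2}(v)\}=\{n-1-\deg_G(v)\}$ together with the girth~$\le 6$ constraint and $2$-connectedness. If a clean combinatorial proof proves elusive, a computer-assisted enumeration of all $2$-connected graphs with radius~$3$, diameter in $\{3,4\}$, and the correct edge count on $n\le 11$ vertices, followed by an explicit squco check, is a reliable fallback and is how I would expect to finish the argument in practice.
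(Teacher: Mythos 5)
First, a point of comparison: the paper you are working from does not prove this statement at all. It is transcribed verbatim from \cite[Theorem 4.8]{MilPedPelVer} as a known result, and the proof on record there is essentially an exhaustive computer search over graphs on at most $11$ vertices, filtered by necessary conditions of the kind you list (connectivity, absence of cut vertices, radius $3$, the degree-sequence identity). So the only part of your proposal that corresponds to an existing proof is the ``fallback'' in your final sentence.

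As a self-contained argument, your proposal has a genuine gap at exactly the place you identify, and the sketched repairs do not work. The claim that every squco graph on $8$ to $11$ vertices contains an induced $C_7$ and is a blow-up $C_7[k_1,\dots,k_7]$ is the entire content of the hard direction, and neither of your proposed mechanisms produces an induced $7$-cycle: a squco graph may have girth $3$ or $4$ (the paper points to a girth-$3$ squco graph on $41$ vertices and the girth-$4$ Franklin graph on $12$ vertices), so ``the shortest odd cycle'' need not have length $7$, and a BFS layering from an eccentric vertex yields a geodesic of length $3$ or $4$, not an induced $C_7$. The Franklin graph, being bipartite, contains no induced $C_7$ at all, so any such structural argument must use $n\le 11$ in an essential, quantitative way that your sketch does not supply; nor do you indicate how the false-twin claim for the remaining $n-7$ vertices would be forced once an induced $C_7$ is found. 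There is also a factual slip in the easy range: $C_6$ is a $2$-connected graph on $6$ vertices with radius $3$, so your stated reason for disposing of $n\le 6$ is wrong (the conclusion survives because $\Delta(C_6)=2$ and Proposition~\ref{thm:8.4.2} applies, or by direct inspection). The sufficiency half of your proposal is sound, and the computer-assisted enumeration you offer as a fallback is a legitimate proof --- indeed it is essentially the original one --- but the ``clean combinatorial'' route as described is not close to complete.
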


\begin{lem}\label{lem:bipartite1}
\textup{\cite[Lemma 5.1]{MilPedPelVer}} Let $G$ be a bipartite squco
graph with bipartition $\{A,B\}$.  Then, every two vertices in $A$
have a common neighbor (in $B$).
\end{lem}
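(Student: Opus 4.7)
The plan is to argue by contradiction. Suppose $u, v \in A$ have no common neighbor in $B$. Since $G$ is bipartite and connected (Proposition~\ref{prop:cut-vertex}), $d_G(u,v)$ is a positive even integer; the absence of a common neighbor rules out $d_G(u,v)=2$, and Proposition~\ref{prop:radius-diameter} gives $\diam(G)\le 4$, so $d_G(u,v)=4$.

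The key tool is that $\alpha(G^2) = \alpha(\overline{G}) = \omega(G) = 2$, the last equality because $G$ is bipartite and has at least one edge. Thus no three vertices of $G$ are pairwise at distance at least $3$. Applying this to every triple $\{u,v,w\}$ and using the parity of distances in a bipartite graph, I obtain
\[
B \;=\; N_G(u) \,\sqcup\, N_G(v),
\qquad
A \setminus \{u,v\} \;\subseteq\; N_2(u,G) \cup N_2(v,G).
\]
Iterating the same trichotomy on any pair $(v,w)$ with $d_G(v,w)=4$ forces $N_G(w)=N_G(u)$. Hence $A_u := \{u\}\cup(N_4(v,G)\cap A)$ is a false-twin class with common neighborhood $N_G(u)$, and $A_v$ is defined symmetrically. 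With $P:=N_2(u,G)\cap N_2(v,G)$, we have the disjoint decomposition $A = A_u \sqcup A_v \sqcup P$; connectedness forces $P\ne\emptyset$ (otherwise $G$ splits into the two components induced by $A_u\cup N_G(u)$ and $A_v\cup N_G(v)$), and a further application of the $\alpha(G^2)=2$ trick to any putative distance-$4$ pair inside $P$ rules it out, so every pair in $P$ is at distance exactly $2$.

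To close the argument I would use the identity
\[
2|E(G)| \;=\; ab + T_A + T_B,
\]
obtained by equating $|E(G^2)|$ and $|E(\overline{G})|$, where $T_A$ and $T_B$ count pairs at distance exactly $4$ inside $A$ and $B$, respectively. The structural analysis gives $T_A=|A_u|\cdot|A_v|$ and pins down all degrees on the $A$-side ($|N_G(u)|$ and $|N_G(v)|$ in the two twin classes, and at most $b$ on the $P$-vertices). I would then combine this with an analogous dissection of $B$---reducing either to a symmetric distance-$4$ pair in $B$ (and applying the same argument there) or to $T_B=0$---and extract a contradiction from the global edge count. The main technical obstacle will be this closure: the extracted structure is highly symmetric and admits several consistent numerical regimes, so squeezing out an inconsistency will likely require a careful parity or inequality argument rather than a single counting identity.
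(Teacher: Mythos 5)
This lemma is not proved in the paper at all: it is transcribed verbatim from \cite[Lemma 5.1]{MilPedPelVer} and used as a black box, so there is no in-paper argument to compare yours against. Judged on its own terms, your attempt has a genuine gap -- in fact two. The setup is fine: $\alpha(G^2)=\alpha(\overline G)=\omega(G)=2$ is correct for a nontrivial bipartite squco graph, and it does force $B=N_G(u)\sqcup N_G(v)$ for a distance-$4$ pair $u,v\in A$, as well as the false-twin classes $A_u$, $A_v$ with common neighborhoods $N_G(u)$, $N_G(v)$. But the claim that ``a further application of the $\alpha(G^2)=2$ trick'' rules out a distance-$4$ pair $p,p'$ inside $P=N_2(u,G)\cap N_2(v,G)$ does not hold: applying the same trichotomy to $(p,p')$ merely yields a second decomposition $B=N_G(p)\sqcup N_G(p')$ transversal to the first one, with $u$ and $v$ landing in the new middle class $N_2(p,G)\cap N_2(p',G)$. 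Nothing at that level of analysis is contradictory, so this step needs an actual argument or must be carried along as an extra case.

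The more serious problem is that the proof is never closed. Your final paragraph states the identity $2|E(G)|=ab+T_A+T_B$ (which is correct, with $T_A\ge |A_u|\cdot|A_v|\ge 1$) and then says you ``would'' combine it with a dissection of $B$ and ``extract a contradiction,'' while conceding that the structure ``admits several consistent numerical regimes.'' That is an accurate self-assessment: a single global edge count cannot finish this, because the configuration you have isolated is not numerically inconsistent. The contradiction has to come from the full strength of the isomorphism $\overline{G^2}\cong G$ -- for instance, that $\overline{G^2}$ must itself be bipartite (so one must exclude odd cycles of length at least $5$ built from the distance-$\ge 3$ pairs your decomposition produces), $2$-connected, and degree-equivalent to $G$, together with $\radius(G)=3$ from Proposition~\ref{prop:radius-diameter}. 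As written, the argument establishes useful structure but does not prove the lemma; the decisive step is missing.
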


Note every bipartite graph $G$ with $\diam(G)= 3$ satisfies
$\overline{G^2}=\overline{G}^{\textrm{bip}}$. This implies the following theorem.

\begin{sloppypar}
\begin{thm}\label{thm:bipartite}
\textup{\cite[Theorem 5.2]{MilPedPelVer}}
For a nontrivial bipartite graph $G$, the following conditions are equivalent:
\begin{enumerate}
  \item $G$ is squco.
  \item $G$ is bipartite self-complementary and of diameter $3$.
\end{enumerate}
\end{thm}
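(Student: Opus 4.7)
The plan is to prove both implications directly, leveraging the observation stated just before the theorem that $\overline{G^2} = \overline{G}^{\textrm{bip}}$ for every bipartite $G$ with $\diam(G)=3$. Before using that observation, I would briefly justify it: if $G$ is bipartite with parts $A,B$ and $\diam(G)=3$, then two vertices are adjacent in $\overline{G^2}$ iff they are at distance exactly $3$ in $G$; since distances within a part are even and distance $3$ is odd, such pairs lie in $A\times B$, and for $a\in A$, $b\in B$ we have $d_G(a,b)\in\{1,3\}$, so $d_G(a,b)=3$ iff $ab\notin E(G)$. Hence the edge set of $\overline{G^2}$ equals $AB\setminus E(G)$, which is exactly the edge set of $\overline{G}^{\textrm{bip}}$.

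The direction (2)$\Rightarrow$(1) is then immediate: if $G$ is bipartite self-complementary with $\diam(G)=3$, then $G\cong \overline{G}^{\textrm{bip}}=\overline{G^2}$, which is equivalent to $G^2\cong\overline{G}$.

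For the direction (1)$\Rightarrow$(2), I would start from a nontrivial bipartite squco graph $G$ with bipartition $\{A,B\}$. By Proposition~\ref{prop:radius-diameter}, $\diam(G)\in\{3,4\}$, so it suffices to rule out diameter~$4$. Lemma~\ref{lem:bipartite1}, applied to $A$ and then (by symmetry of the hypothesis in the two parts) to $B$, shows that any two vertices in the same part share a common neighbor, hence are at distance at most $2$ in $G$. Vertices in opposite parts are at odd distance, and any odd distance in a graph of diameter at most $4$ is $1$ or $3$. Combining these, every two vertices of $G$ are at distance at most $3$, so $\diam(G)=3$. Applying the observation above, $G\cong\overline{G^2}=\overline{G}^{\textrm{bip}}$, so $G$ is bipartite self-complementary.

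The main thing to be careful about is the step that rules out $\diam(G)=4$: one must note that Lemma~\ref{lem:bipartite1} is symmetric in the two parts, since swapping $A$ and $B$ still yields a bipartite squco graph with the same hypotheses. Everything else is a straightforward assembly of the preliminary results.
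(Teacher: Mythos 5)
Your proposal is correct and follows essentially the same route as the paper, whose entire argument for this transcribed result is the remark that every bipartite $G$ with $\diam(G)=3$ satisfies $\overline{G^2}=\overline{G}^{\textrm{bip}}$; you justify that identity by the same parity-of-distances reasoning and use it for both directions. The only part you add beyond the paper's one-line sketch is the reduction of (1)$\Rightarrow$(2) to showing $\diam(G)=3$ via Proposition~\ref{prop:radius-diameter} and Lemma~\ref{lem:bipartite1} applied to both parts, which is exactly the missing step and is handled correctly.
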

\end{sloppypar}

\section{The girth of a squco graph is not 6} \label{sec:girth}

\begin{thm}\label{GirthNot6}
There is no squco graph of girth $6$.
\end{thm}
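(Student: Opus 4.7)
The plan is to argue by contradiction: suppose that $G$ is a squco graph of girth $6$. First, I extract a clean degree identity from the girth-$6$ structure. Since $G$ has no cycles of length $3$, $4$, or $5$, for any vertex $v$ the neighborhood $N_1(v)$ is independent, and for distinct $u_1,u_2\in N_1(v)$ the private sets $N(u_i)\setminus\{v\}$ are pairwise disjoint with no edges between them (any coincidence or crossing edge would force a $4$-cycle through $v$, and a common neighbor would force a $5$-cycle). Hence $|N_2(v)|=\sum_{u\in N(v)}(\deg(u)-1)$, giving
\[
\deg_{G^2}(v)=|N_1(v)|+|N_2(v)|=\sum_{u\in N(v)}\deg(u).
\]
Since $G^2\cong\overline G$, the multisets $\{\sum_{u\in N(v)}\deg(u):v\in V\}$ and $\{n-1-\deg(v):v\in V\}$ coincide, so summing yields $\sum_v\deg(v)(\deg(v)+1)=n(n-1)$, and matching the extremes gives, with $\delta=\delta(G)$ and $\Delta=\Delta(G)$,
\[
\delta^2+\Delta+1\le n\le\Delta^2+\delta+1.
\]
By Proposition~\ref{prop:cut-vertex} we have $\delta\ge 2$, and by Proposition~\ref{thm:8.4.2} we have $\Delta\ge 3$ (else $G\in\{K_1,C_7\}$, neither of girth $6$).

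In the subcubic case $\Delta=3$, writing $n_3$ for the number of degree-$3$ vertices, the identity reduces to $n_3=n(n-7)/6$, forcing $n\in\{7,9,10,12,13\}$. Each is excluded: $n=7$ forces $G=C_7$ (girth $7$); for $n\in\{9,10\}$ the graphs listed in Theorem~\ref{thm:8.4.8} all have girth $4$ or $7$; for $n=12$, Proposition~\ref{thm:8.4.7} gives the Franklin graph, whose girth is $4$; and $n=13$ would demand a $3$-regular girth-$6$ graph on $13$ vertices, contradicting the Moore bound $n\ge 2(\Delta^2-\Delta+1)=14$.

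The case $\Delta\ge 4$ is the main obstacle. My strategy here is to bring in the extra constraint $\alpha(G^2)=\alpha(\overline G)=\omega(G)=2$ (true since $G$ is triangle-free): for every vertex $v$ of eccentricity $3$ (existing because $\radius(G)=3$ by Proposition~\ref{prop:radius-diameter}), $N_{\ge 3}(v)$ must be a clique in $G^2$, so any $u\in N_3(v)$ satisfies $V\subseteq B_2(v)\cup B_2(u)$ and hence $n\le 2+\deg_{G^2}(v)+\deg_{G^2}(u)$, using $|B_2(w)|=1+\deg_{G^2}(w)$. Choosing $v,u$ with small $G^2$-degrees (equivalently, high $G$-degrees via the squco bijection) should, in combination with the identity and the sandwich above, pin the triple $(n,\delta,\Delta)$ to a short list. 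I expect the final elimination—closing out the last surviving parameter combinations—to be the most delicate step, likely requiring an auxiliary counting argument such as counting $6$-cycles in $G$ (each producing an induced $K_{3\times 2}$ in $G^2$ and hence, via the squco isomorphism, a corresponding induced $3K_2$ in $G$), which forces further structural obstructions on high-$\Delta$ configurations.
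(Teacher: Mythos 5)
Your girth-$6$ identity $\deg_{G^2}(v)=\sum_{u\in N(v)}\deg(u)$ is correct, and your subcubic case is complete and correct: the identity $\sum_v\deg(v)(\deg(v)+1)=n(n-1)$ does force $n\in\{7,9,10,12,13\}$ when $\Delta=3$ and $\delta\ge 2$, and each value is properly excluded via Theorem~\ref{thm:8.4.8}, Proposition~\ref{thm:8.4.7}, and the Moore bound. That part is a genuinely different (and rather clean) route from the paper's. But the case $\Delta\ge 4$ --- which you yourself identify as ``the main obstacle'' --- is not proved; it is a plan hedged with ``should'', ``I expect'' and ``likely requiring'', and that is where essentially all the difficulty of the theorem lives. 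Worse, the tools you propose for it are demonstrably too weak to ``pin the triple $(n,\delta,\Delta)$ to a short list''. The covering inequality $n\le 2+\deg_{G^2}(v)+\deg_{G^2}(u)$ is, after translating through $\overline{G^2}\cong G$, just the statement that $\deg_G(x)+\deg_G(y)\le n$ for every edge $xy$ of $G$; this already follows from $G$ being triangle-free and so adds no numerical leverage. What remains is the degree-sum identity together with the sandwich $\delta^2+\Delta+1\le n\le\Delta^2+\delta+1$, and these admit many consistent parameter choices for every $\Delta\ge 4$ (already for $\Delta=4$, $\delta=2$ one finds admissible degree sequences for most $n$ between $9$ and $19$, e.g.\ $n=15$ with three vertices of degree $2$ and six each of degrees $3$ and $4$). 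The auxiliary $6$-cycle/$3K_2$ counting is not developed enough to evaluate.

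For comparison, the paper closes the high-degree case not by global counting but by localizing at a vertex $w$ of maximum degree $k$. If $w$ has a neighbor of degree at least $3$, a short argument around that neighbor produces either a vertex of degree at least $k+1$ in $\overline{G^2}$ or a cut vertex of $\overline{G^2}$, contradicting $\overline{G^2}\cong G$ via Proposition~\ref{prop:cut-vertex}. Otherwise every neighbor of $w$ has degree exactly $2$, and one shows $N_4(w,G)=\emptyset$ and $|N_3(w,G)|\le 3$, so that $n=2k+1+|N_3(w,G)|$; each of the three possible values of $|N_3(w,G)|$ is then eliminated by a degree count in $\overline{G^2}$ or by the classification of squco graphs on at most $11$ vertices. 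Some localization of this kind (or a genuinely new global idea) is needed before your argument constitutes a proof; as it stands, the $\Delta\ge 4$ case is a gap.
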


\begin{proof}
Suppose for a contradiction that $G$ is a squco graph of girth $6$.
First, we observe that if $x$ is a vertex of $G$, then there are no
edges in any of the sets $N_i(x,G)$ for $i=1,2$ and that no two distinct
vertices in $N_1(x,G)$ have a common neighbor in $N_2(x,G)$.  Let $k =
\Delta(G)$ be the maximum degree of $G$, and let $w$ be a vertex of
degree $k$.  By Proposition~\ref{thm:8.4.2}, the only squco graphs with
maximum degree at most $2$ are $K_1$ and $C_7$, hence we have $k\ge
3$.

We consider two cases.

{\it Case 1.}  $w$ has a neighbor of degree at least three.

Let $v$ be a neighbor of $w$ of degree at least three, and let $p$ and
$q$ be two neighbors of $v$ other than $w$. If one of them, say $p$,
is of degree at least $3$, then $p$ has at least two neighbors in
$N_2(v,G)$ and thus $\Delta(\overline{G^2})\ge
|N_1(q,\overline{G^2})|\geq k+1$, contrary to the fact that
$\overline{G^2}\cong G$.  Since Proposition~\ref{prop:cut-vertex}
excludes the possibility of having degree~$1$ vertices, both $p$ and
$q$ are of degree~$2$. Let $a$ and $b$ be the unique neighbors of $p$
and $q$ in $N_2(v,G)$, respectively. The set $N_3(v,G)$ is nonempty,
because the radius of $G$ is $3$ by
Proposition~\ref{prop:radius-diameter}.  Vertices $a$ and $b$ must be
adjacent to all vertices in $N_3(v,G)$, otherwise
$\Delta(\overline{G^2})\ge
\max\{|N_1(p,\overline{G^2})|,|N_1(q,\overline{G^2})|\}\geq k+1$,
contrary to the fact that $\overline{G^2}\cong G$. To avoid a
$4$-cycle in $G$, we conclude that $|N_3(v,G)| = 1$.  But now, the
degree of $v$ in $\overline{G^2}$ is $1$, which implies that
$\overline{G^2}$ has a cut vertex, contrary to the fact that $G$ is
squco and Proposition~\ref{prop:cut-vertex}.

{\it Case 2.} All neighbors of $w$ are of degree at most two.

In this case, all neighbors of $w$ are of degree exactly two. In
particular, $|N_2(w,G)| = |N_1(w,G)|= k\ge 3$.  Now we will show that
every vertex $x$ from $N_2(w,G)$ is of degree at least $|N_3(w,G)|$.
Let $x\in N_2(w,G)$, and let $y$ be the unique neighbor of $x$ in
$N_1(w,G)$.  Vertex $x$ has at least $|N_3(w,G)|-1$ neighbors in
$N_3(w,G)$, since otherwise $|N_1(y,\overline{G^2})|\geq k+1$.  This
implies that any two vertices from $N_2(w,G)$ (the size of $N_2(w,G)$
is at least $3$) have at least $|N_3(w,G)|-2$ common neighbors in
$N_3(w,G)$. This bounds $|N_3(w,G)|\leq 3$, otherwise we would have a
$4$-cycle.

Also we have that $N_4(w,G)=\emptyset$ since, otherwise, for any
$z\in N_4(w,G)$ we would have $|N_1(z,\overline{G^2})|\geq k+1 >
\Delta(G) = \Delta(\overline{G^2})$, which is a contradiction. It
follows that $|G|=|\{w\}\cup N_1(w,G)\cup N_2(w,G)\cup N_3(w,G)|=
1+2k+|N_3(w,G)|$.

Suppose $|N_3(w,G)|=3$. To each of the three pairs of vertices in
$N_3(w,G)$, associate, if possible, their common neighbor in
$N_2(w,G)$. Because each vertex in $N_2(w,G)$ is connected to at least
two vertices in $N_3(w,G)$, it is surely associated with some pair.
If $|N_2(w,G)|\geq 4$ then some two vertices from $N_2(w,G)$ are
associated with the same pair and we get a $4$-cycle, a contradiction.
We thus have $|N_1(w,G)|=|N_2(w,G)|= k = 3$ and $|N_{\geq 4}(w,G)|=0$.
This implies that our graph has exactly ten vertices. All squco graphs
with at most $11$ vertices are listed in Theorem~\ref{thm:8.4.8} and
none of them has girth $6$. Hence this is a contradiction with $G$
having girth $6$.

Suppose $|N_3(w,G)|= 2$. If $k\leq 4$, then our graph has no more
than $11$ vertices, which is not possible.  Hence $k\geq 5$. There
must be at least $2k-1$ vertices of degree two in $G$ (all $k$
vertices in $N_1(w,G)$; at most one of the $k$ vertices in $N_2(w,G)$ has
both vertices from $N_3(w,G)$ for neighbors, otherwise we have a
$4$-cycle as before). In $\overline{G^2}$ at most $k+3$ of them are of
degree two, because every vertex in $N_1(w,G)$ will be connected to
all but one vertex in $N_2(w,G)$ in $\overline{G^2}$, which is a
contradiction, because $k\geq 5$.

The last possibility is that $|N_3(w,G)|=1$, but then $w$ would be of
degree $1$ in $\overline{G^2}$, again a contradiction. This completes
the proof.
\end{proof}

\section{A bipartite construction for squco graphs} \label{sec:construction}

Given two vertex-disjoint bipartite graphs $G$ and $G'$ with
respective bipartitions $\{A,B\}$ and $\{A',B'\}$, consider the
construction $H(G,G')$ as follows: Take four new vertices
$C=\{c_1,c_2\}$ and $D=\{d_1,d_2\}$ and construct a new bipartite
graph $H=H(G,G')$, with $V(H)=V(G)\cup V(G')\cup C\cup D$, with
bipartition $\{A\cup A'\cup C, B\cup B'\cup D\}$ and edges given by
$E(H)=E(G)\cup E(G')\cup \{c_1d_1, c_2d_2\}\cup AD\cup A'B\cup B'C$
(remember that $AD=\{ad :a\in A, d\in D\}$, and so on). For example,
the Franklin graph (see Fig.~\ref{fig:FranklinsGraph}) is the result
of this construction when each of $G$ and $G'$ is isomorphic to the
graph consisting of two disjoint copies of $K_2$.

Furthermore, given a bipartite graph $G$ with a bipartition $\{A,B\}$,
take again $C=\{c_1,c_2\}$, $D=\{d_1,d_2\}$ and construct the new
bipartite graph ${\it Ext}(G)$ such that $V({\it Ext}(G))=V(G)\cup
C\cup D$ and $E({\it Ext}(G))=E(G)\cup \{c_1d_1,c_2d_2\}\cup
\{d_1\}A\cup \{c_2\}B$. Consider the bipartition of ${\it Ext}(G)$ to
be $\{A\cup C,B\cup D\}$. Clearly $G$ is an induced subgraph of ${\it
  Ext}(G)$. Also, note that regardless of $G$, graphs ${\it Ext}(G)$
and $\overline{{\it Ext}(G)}^{\textrm{bip}}$ do not contain isolated
vertices and all the four parts involved (the two parts of ${\it
  Ext}(G)$ and the two parts of $\overline{{\it
    Ext}(G)}^{\textrm{bip}}$) are non-empty.

\begin{thm}\label{AnyBipInASquco}
Every bipartite graph is an induced subgraph of a bipartite squco
graph. In particular, there are squco graphs containing arbitrarily
long induced paths and cycles.
\end{thm}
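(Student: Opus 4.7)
The plan is to build, for any bipartite graph $G$, a bipartite squco graph $H$ containing $G$ as an induced subgraph by combining both constructions introduced above. Set $G_1 := {\it Ext}(G)$ and let $G_2$ be a vertex-disjoint copy of the bipartite complement of $G_1$, identified via bipartition-preserving bijections $\alpha : A_1 \to A_2$ and $\beta : B_1 \to B_2$; then form $H := H(G_1, G_2)$. Since $G$ sits as an induced subgraph of ${\it Ext}(G)$ and the $H$-construction introduces no edges among the vertices of $V(G_1)$, $G$ is an induced subgraph of $H$. By Theorem~\ref{thm:bipartite}, it now suffices to show that $H$ is bipartite self-complementary and has diameter $3$.

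For bipartite self-complementarity, I would exhibit the explicit isomorphism $\phi : H \to \overline{H}^{\textrm{bip}}$ that swaps the two halves of the construction: set $\phi = \alpha$ on $A_1$, $\phi = \beta$ on $B_1$, $\phi = \alpha^{-1}$ on $A_2$, $\phi = \beta^{-1}$ on $B_2$, fix $c_1$ and $c_2$, and swap $d_1$ with $d_2$. A short case analysis on the edge types of $H$ (edges of $G_1$, edges of $G_2$, the matching $\{c_1d_1, c_2d_2\}$, and the three complete bipartite joins $A_1 D$, $A_2 B_1$, $B_2 C$) shows that $\phi$ sends every edge of $H$ to an edge of $\overline{H}^{\textrm{bip}}$; the $G_1$- and $G_2$-edges switch places precisely because $G_2$ is defined as the bipartite complement of $G_1$. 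Counting edges on both sides (which match because $|E(G_1)| + |E(G_2)| = |A_1|\cdot|B_1|$) then forces $\phi$ to be an isomorphism.

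The main obstacle is proving $\diam(H) = 3$, and this is exactly where the ${\it Ext}$ step is needed: as recorded in the remark preceding the theorem, neither $G_1$ nor $G_2$ has isolated vertices and each of the four sides $A_1, B_1, A_2, B_2$ is non-empty. I would verify $d_H(u,v) \le 3$ by a case split on the classes of $u$ and $v$ among $A_1, B_1, A_2, B_2, C, D$. Pairs inside a single independent side reach distance $2$ through the complete joins added by the construction (for example, any two vertices of $A_1$ share every vertex of $D$ as a common neighbor). For a non-adjacent pair $a \in A_1$, $b \in B_1$, a length-$3$ path is obtained by going from $a$ to any $G_1$-neighbor (which exists because $G_1$ has no isolated vertex), then to any vertex of $A_2$, then to $b$; the symmetric case inside $V(G_2)$ uses non-isolation in $G_2$. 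The remaining mixed pairs involving $C$ or $D$ are handled by short direct arguments using the matching $\{c_1d_1, c_2d_2\}$ together with the joins $A_1D$, $A_2B_1$ and $B_2C$. Since any pair $a \in A_1, b \in B_1$ with $ab \notin E(G_1)$ has no common neighbor in $H$, such a pair already realises distance exactly $3$, so $\diam(H) = 3$ and Theorem~\ref{thm:bipartite} gives that $H$ is squco.

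For the ``in particular'' clause, it suffices to apply the construction to $G = P_n$ and to $G = C_{2n}$, which are bipartite, yielding bipartite squco graphs containing arbitrarily long induced paths and induced cycles.
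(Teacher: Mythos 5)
Your proposal is correct and follows essentially the same route as the paper: normalize via ${\it Ext}(G)$ so that all four parts are nonempty and free of isolated vertices, form $H(G_1,\overline{G_1}^{\textrm{bip}})$, exhibit the swap isomorphism for bipartite self-complementarity, verify diameter $3$, and conclude via Theorem~\ref{thm:bipartite}. You merely fill in details the paper leaves implicit (the precise permutation on $C\cup D$ and the case analysis for the diameter), and these details check out.
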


\begin{proof}
Let $G$ be a bipartite graph with bipartition $\{A,B\}$ and
$G'=\overline{G}^{\textrm{bip}}$ with bipartition $\{A',B'\}$. Make $G$
and $G'$ disjoint. Thanks to the construction ${\it Ext}(G)$, we may assume
without loss of generality that no part of $G$ or $G'$ is empty and
that both $G$ and $G'$ have no isolated vertices.

Since $G'=\overline{G}^{\textrm{bip}}$, the graph $H=H(G,G')$ is bipartite
self-complementary: The required isomorphism, viewed as a permutation
of $V(H)$, leaves the subsets $C$ and $D$ fixed and exchanges $A$ with
$A'$ and $B$ with $B'$.

Since $A,A',B,B'\neq \emptyset$ and none of these sets contain
isolated vertices in $G$ or $G'$, it is easy to check that $H(G,G')$
has diameter $3$. It follows by Theorem~\ref{thm:bipartite} that
$H=H(G,G')$ is a bipartite squco graph containing $G$ as induced
subgraph.
\end{proof}

\begin{thm}\label{SqucoIsIsoComplete}
The problem of recognizing squco graphs is graph isomorphism complete.
\end{thm}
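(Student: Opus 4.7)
I would prove GI-completeness by exhibiting polynomial-time reductions in both directions. The easy direction, that squco recognition reduces to graph isomorphism, is immediate: given an input $G$, compute $G^2$ and $\overline{G}$ in polynomial time and make a single query to a GI oracle.

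For the reduction from GI to squco recognition, I plan to leverage the $H(\cdot,\cdot)$ and ${\it Ext}(\cdot)$ constructions introduced in Section~\ref{sec:construction}. Since the isomorphism problem for bipartite graphs is itself GI-complete, it suffices to build, from two bipartite inputs $G_1$ and $G_2$, a graph $H$ such that $H$ is squco if and only if $G_1 \cong G_2$. The plan is to preprocess each $G_i$ with ${\it Ext}$ so that no part is empty and no isolated vertex remains, then set $G_2' = \overline{G_2}^{\textrm{bip}}$ and output $H = H(G_1, G_2')$. The forward implication follows immediately from Theorem~\ref{AnyBipInASquco}: if $G_1 \cong G_2$, then $G_2' \cong \overline{G_1}^{\textrm{bip}}$, so $H$ is isomorphic to $H(G_1, \overline{G_1}^{\textrm{bip}})$, which is squco.

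The main obstacle is the converse: if $H$ is squco then $G_1 \cong G_2$. Since $H$ is bipartite and has diameter $3$ by construction, Theorem~\ref{thm:bipartite} provides a bipartite isomorphism $\phi: H \to \overline{H}^{\textrm{bip}}$, and the goal is to extract from $\phi$ an isomorphism $G_1 \to G_2$. I would pursue a rigidity argument showing that any such $\phi$ must send the four gadget vertices $C \cup D$ to $C \cup D$ and respect the block decomposition of $V(H)$ into the sets $A$, $B$, $A'$, $B'$, $C$, $D$; once this is in hand, the restriction of $\phi$ to $V(G_1)$ yields an isomorphism $G_1 \to \overline{G_2'}^{\textrm{bip}} = G_2$ (modulo the natural swap of sides that accompanies a bipartite-self-complementary map, exactly as in the explicit isomorphism used in the proof of Theorem~\ref{AnyBipInASquco}). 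The rigidity itself should follow from local invariants: each of $c_1,c_2,d_1,d_2$ has exactly one neighbor inside the four-vertex gadget together with an entire bipartition class of $H$ as outside neighbors, and this pattern can be made uniquely identifying by controlling the preprocessing. If the bare construction is not rigid enough in some degenerate case (for instance, when a vertex of $G_1$ accidentally mimics the gadget degree pattern), I would harden the reduction by attaching to each $c_i$ and $d_i$ a small distinctive ``tag'' subgraph whose degree profile does not occur elsewhere in $H$, forcing $\phi(C \cup D) = C \cup D$. This rigidity analysis of $\phi$ is the technical heart of the proof.
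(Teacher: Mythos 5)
Your overall strategy---reduce from GI via the $H(\cdot,\cdot)$ construction and then argue that any isomorphism witnessing squco-ness must respect the block structure---is the same as the paper's, and the easy direction is fine. But your proposal defers exactly the step that carries all of the difficulty (``the rigidity analysis of $\phi$'') to an unspecified future argument, and the bare construction you describe, $H(G_1,\overline{G_2}^{\textrm{bip}})$ for arbitrary bipartite inputs preprocessed by ${\it Ext}$, is not rigid as it stands. The missing idea in the paper is to encode the inputs as vertex--edge incidence graphs: after adding two universal vertices to each $G_i$ (forcing $m>n$ and non-degeneracy), one sets $I_1=I(G_1)$, $I_2=\overline{I(G_2)}^{\textrm{bip}}$ and forms $H(I_1,I_2)$. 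Because every vertex on the edge side of an incidence graph has degree exactly $2$, the right part of $H(I_1,I_2)$ takes only three degrees: $n+2$ on $B$, $n$ on $B'$, and $n+1$ on $D$. Together with $m>n$ (which forces any isomorphism to preserve the two sides of the bipartition, since the sides have $2n+2$ and $2m+2$ vertices), this pins down all six blocks and, crucially, forces the isomorphism to \emph{exchange} $B$ with $B'$ (hence $A$ with $A'$), giving $I(G_1)\cong I(G_2)$ and so $G_1\cong G_2$. No tag gadgets are needed; the incidence-graph encoding \emph{is} the rigidity gadget, and it also handles non-bipartite inputs directly, so one never needs to invoke GI-completeness of bipartite GI.

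Two concrete dangers in your version that this encoding avoids. First, with arbitrary bipartite inputs the $H$-degrees of vertices in $B$, $B'$ and $D$ can coincide, so degree profiles alone will not identify $C\cup D$, and you have not exhibited tags that provably work. Second, and more seriously, showing $\phi(C\cup D)=C\cup D$ and that $\phi$ respects the block decomposition is not enough: you must show $\phi$ swaps the $G_1$-half with the $G_2$-half. If $G_1$ and $G_2$ were two non-isomorphic bipartite self-complementary graphs with equal part sizes, one must rule out that $H$ is squco via an isomorphism fixing each half; this requires analysing the complete/empty joins between blocks (e.g.\ $A$ is completely joined to $D$ in $H$ but to $B'$ in $\overline{H}^{\textrm{bip}}$), which you do not do. A smaller unaddressed point: an isomorphism of bipartite graphs need not respect the designated sides, so your appeal to ``bipartite GI'' must be made in a side-respecting form and $\overline{G_2}^{\textrm{bip}}$ must be well defined independently of the choice of bipartition; the incidence-graph reduction sidesteps this because the two sides of $I(G)$ are distinguishable once $m\neq n$.
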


\begin{proof}
Given a graph $G$, consider $I(G)$, the vertex-edge bipartite
incidence graph of $G$, that is, the left part of $I(G)$ is $A=V(G)$,
the right part is $B=E(G)$, and $x\in A=V(G)$ is adjacent in $I(G)$ to
$e\in B=E(G)$, if and only if $x$ is incident to $e$ in $G$. Clearly
$G_1\cong G_2$ if and only if $I(G_1)\cong I(G_2)$. Without loss of
generality we can consider only graphs $G$ satisfying
\begin{enumerate}
\item $m =|E(G)| > |V(G)|= n>1$, and
\item $I(G)$ and $\overline{I(G)}^{\textrm{bip}}$ do not have isolated vertices.
\end{enumerate}
This is so because $G_1\cong G_2$ if and only if $G_1\ast K_2 \cong G_2\ast
K_2$, and because $G\ast K_2$ always satisfies conditions (1) and (2)
($G\ast K_2$ is the result of adding two universal vertices to $G$).
Also, it suffices to consider only pairs of graphs $G_1$, $G_2$ having
$|V(G_1)|= |V(G_2)|$ and $|E(G_1)|= |E(G_2)|$, since otherwise the
fact that $G_1$ and $G_2$ are not isomorphic can be detected in linear
time.

Now let $G_1$, $G_2$ be two such graphs, and $I_1:=I(G_1)$ and
$I_2:=\overline{ I(G_2)}^{\textrm{bip}}$. The rest of this proof is
devoted to showing that $G_1\cong G_2$ if and only if $H(I_1,I_2)$ is
squco.

If $G_1\cong G_2$, then $H(I_1,I_2)$ is squco because
of the considerations in the proof of Theorem~\ref{AnyBipInASquco}.

\begin{sloppypar}
Suppose now that $H(I_1,I_2)$ is squco. We shall show that the
required isomorphism (seen as a permutation of $H(I_1,I_2)$ )
exchanges $V(I_1)$ and $V(I_2)$ in the expected way ($A=V(G_1)$ with
$A'=V(G_2)$ and $B=E(G_1)$ with $B'=E(G_2)$) and hence, $I(G_1) \cong
I(G_2)$ which implies $G_1\cong G_2$.
\end{sloppypar}

First, observe that the left part of $H(I_1,I_2)$ has $2n+2$ vertices
and the right part contains $2m+2$ vertices. Since we assumed $m>n,$
it follows that any isomorphism must preserve each of the two
parts. Then observe that the degrees of the vertices in the right part
determine the subset to which they belong: vertices in subset $B'$
have degree $n$, vertices in subset $B$ have degree $n+2$ and vertices
in subset $D$ have degree $n+1$.  Then, also the subsets $A, B$ and
$C$ are uniquely determined.

It follows that the required isomorphism between $H(I_1,I_2)$ and
$\overline{H(I_1,I_2)^2}$ must exchange $V(I_1)$ and $V(I_2)$ in the
expected way and the result follows.
\end{proof}

\section{There are no nontrivial planar bipartite squco graphs} \label{sec:planar}

\begin{thm}\label{thm:bipartite-planar}
The only bipartite planar squco graph is $K_1$.
\end{thm}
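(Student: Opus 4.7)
The plan is to suppose, for contradiction, that $G$ is a nontrivial bipartite planar squco graph with bipartition $\{A,B\}$, $p=|A|$, $q=|B|$, and derive a contradiction. By Theorem~\ref{thm:bipartite}, $G$ is bipartite self-complementary and of diameter~$3$, so $|E(G)|=pq/2$. The bipartite-planar edge bound $|E(G)|\le 2(p+q)-4$ then yields $(p-4)(q-4)\le 8$, and Theorem~\ref{thm:8.4.8} forces $p+q\ge 12$. Proposition~\ref{prop:cut-vertex}, together with the fact that $\overline{G}^{\textrm{bip}}\cong G$ is connected and without cut-vertex, gives $2\le d_G(v)\le q-2$ for $v\in A$ and $2\le d_G(v)\le p-2$ for $v\in B$; in particular $p,q\ge 4$.

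The next step is to reduce to $p=q=6$. For $p=q$, the inequality $p^2/2\le 4p-4$ combined with $p+q\ge 12$ yields $p=q=6$. For $p<q$, the admissible pairs are: $p=4$ with any $q\ge 8$; $p=5$ with $q\in\{8,10,12\}$; and $(p,q)\in\{(6,7),(6,8)\}$. When $p=4$, every $b\in B$ has $d(b)=2$, so the family $\{N(b)\}_{b\in B}$ consists of $2$-subsets of the $4$-set $A$ and, by Lemma~\ref{lem:bipartite1} applied to $B$, is pairwise intersecting; the maximum intersecting family of $2$-subsets of a $4$-set has size $3$ and is either a star (forcing some $a\in A$ to have $d(a)\ge q\ge 8$, contradicting $d(a)\le q-2$) or a triangle (forcing some $a\in A$ to have $d(a)=0$, contradicting $d(a)\ge 2$). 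The cases $p=5$ and $(p,q)\in\{(6,7),(6,8)\}$ are ruled out by analogous but slightly more elaborate intersecting-family / degree-bound arguments using Lemma~\ref{lem:bipartite1} on both parts.

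For $p=q=6$, the graph has $12$ vertices and $18$ edges, all degrees lie in $\{2,3,4\}$, and bipartite self-complementarity forces each part's degree multiset to be symmetric under $d\mapsto 6-d$; in particular each part has equally many degree-$2$ and degree-$4$ vertices. If $G$ is $3$-regular, then by Proposition~\ref{thm:8.4.7} $G$ is the Franklin graph, which is non-planar (it has genus~$1$; equivalently, it contains a subdivision of $K_{3,3}$)---a contradiction. Otherwise $G$ has a vertex $v$ of degree~$2$; without loss of generality $v\in A$ with $N(v)=\{b_1,b_2\}$. Lemma~\ref{lem:bipartite1} applied to $\{v,a\}$ for $a\in A\setminus\{v\}$ gives $N(b_1)\cup N(b_2)=A$, so $d(b_1)+d(b_2)=6+|N(b_1)\cap N(b_2)|$; since $v\in N(b_1)\cap N(b_2)$ and each $d(b_i)\le 4$, the only admissible configurations are $\{d(b_1),d(b_2)\}=\{3,4\}$ with $N(b_1)\cap N(b_2)=\{v\}$, or $d(b_1)=d(b_2)=4$ with $|N(b_1)\cap N(b_2)|=2$.

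In the first sub-case, no $4$-cycle of $G$ contains $v$, so in any planar embedding both faces incident to $v$ have length $\ge 6$; Euler's formula for a bipartite planar graph with $|V|=12$ and $|E|=18$ reads $\sum_k (k-4) f_k=4$, which together with the forced $f_6\ge 2$ pins down $(f_4,f_6)=(6,2)$ with both $6$-faces incident to $v$. A close analysis of this forced configuration, combined with bipartite self-complementarity applied to the planar embedding of $\overline{G}^{\textrm{bip}}\cong G$ (where $v$ has degree~$4$, so a different degree-$2$ vertex must exhibit the same forced face structure on the complementary side of the partition), produces an inconsistency. The second sub-case reduces to the first by applying the argument to the degree-$2$ image, in $\overline{G}^{\textrm{bip}}$, of a degree-$4$ vertex of $G$. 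I expect the main obstacle to be this last, detailed planarity-plus-self-complementarity bookkeeping in the non-$3$-regular subcase; the $p\ne q$ elimination and the $3$-regular subcase are largely routine accounting.
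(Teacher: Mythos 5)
Your skeleton matches the paper's: use Theorem~\ref{thm:bipartite} to get bipartite self-complementarity (hence $m=pq/2$), combine with the planar bipartite bound $m\le 2n-4$ to get $(p-4)(q-4)\le 8$, and run a case analysis on the part sizes. Your treatment of the small cases is fine: the $p\le 3$ and $p=4$ eliminations (your intersecting-family phrasing is a harmless variant of the paper's common-neighbor argument via Lemma~\ref{lem:bipartite1}), the $p=5$ case, and the cubic $(6,6)$ case via Proposition~\ref{thm:8.4.7} and non-planarity of the Franklin graph all go through.

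However, there is a serious gap where you write that the cases $(p,q)\in\{(6,7),(6,8)\}$ are ``ruled out by analogous but slightly more elaborate intersecting-family / degree-bound arguments.'' These two cases are the heart of the theorem, and the paper cannot dispatch them combinatorially: for $(6,8)$ one has $m=2n-4$, so $G$ is a quadrangulation, and the paper passes to the dual $G^*$, builds a cubic plane graph $H$ on the $B$-faces, shows every two faces of $H$ share an edge, and derives the contradiction that $H^*\cong K_6$ is planar; for $(6,7)$ one has $m=2n-5$ and an even more delicate dual/multigraph analysis is needed. Nothing in your outline supplies a substitute for these topological arguments, and degree/intersection constraints alone do not obviously exclude the corresponding abstract bipartite graphs (only their planar embeddings are impossible). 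You have also misplaced the difficulty: in the $(6,6)$ case your long non-regular subcase is vacuous, because the paper's degree argument (a degree-$4$ vertex $u$ in one part and the common neighbor of the two non-neighbors of $u$ have no common neighbor in $\overline{G}^{\textrm{bip}}$, contradicting Lemma~\ref{lem:bipartite1}; degree $2$ reduces to degree $4$ in the complement) forces every vertex of a size-$6$ part opposite a size-$6$ part to have degree exactly $3$, so $G$ is cubic and you are done by the Franklin graph. Thus the one subcase you flag as the ``main obstacle'' can be eliminated in two lines, while the genuinely hard cases $(6,7)$ and $(6,8)$ are left without an argument.
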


\begin{proof}
Assume for a contradiction that $G$ is a bipartite planar squco graph
on $n$ vertices and $m$ edges, where $n>1$.  By
Theorem~\ref{thm:8.4.8}, $n\ge 12$, and by
Proposition~\ref{prop:cut-vertex}, $G$ is connected and of minimum
degree at least $2$.  Furthermore, the fact that $G$ is squco and
bipartite implies by Theorem~\ref{thm:bipartite} that $G\cong
\overline{G}^{\textrm{bip}}$.

Fix a bipartition $\{A,B\}$ of $G$ such that $a:=|A| \le b:=|B|$.  By
Theorem~\ref{thm:bipartite}, $G$ is bipartite self-complementary,
which implies that $|E(\overline G^{\textrm{bip}})| = |E(G)|$ and
consequently $m = ab/2$. Note that if $G$ is \emph{maximal} planar
bipartite, every face of any planar embedding of $G$ must be bounded
by a 4-cycle. In this case, the number of faces is $m/2$ and Euler's
formula yields $n-m+m/2 = 2$, or, equivalently, $m=2n-4$. It follows
that $m\leq 2n-4$ for any planar bipartite graph, and also, that when
$m=2n-5$ we must have that exactly one face of $G$ is bounded by a
6-cycle, while all the others are bounded by a 4-cycle.

From $m\le 2n-4$, we get that $ab\le 4n-8 = 4(a+b)-8$.
This yields
\begin{equation}\label{eq-ab}
(a-4)(b-4)\le 8\,.
\end{equation}
Since $a\le b$, we have $(a-4)^2\le 8$ (whenever $a\ge 4$) which
implies $a\le 6$.  We analyze several cases according to the value of
$a$.

\medskip
\noindent{\it Case 1. $a \le 3$.}  Since $G$ has no cut vertices,
every vertex of $B$ is adjacent to at least two vertices of $A$, which
means that in the bipartite complement of $G$, every vertex of $B$ is
adjacent to at most one vertex of $A$.  This means that $\overline
G^{\textrm{bip}}\cong G$ has a cut vertex, contradicting
Proposition~\ref{prop:cut-vertex}.

\medskip
\noindent{\it Case 2. $a = 4$.}  Since $G$ and $\overline
G^{\textrm{bip}}$ have no cut vertices, we infer that every vertex of
$B$ is adjacent to exactly two vertices of $A$.  Let $A =
\{v_1,v_2,v_3,v_4\}$. By Lemma~\ref{lem:bipartite1}, there exists a
vertex $u\in B$ such that $N(u) = \{v_1,v_2\}$, and a vertex $v\in B$
such that $N(v) = \{v_3,v_4\}$. Since $u$ and $v$ do not have a common
neighbor, Lemma~\ref{lem:bipartite1} yields a contradiction.

\medskip
\noindent{\it Case 3. $a = 5$.}  By Proposition~\ref{prop:cut-vertex},
neither $G$ nor $\overline G^{\textrm{bip}}$ have a vertex of degree
at most $1$, which implies that every vertex of $B$ is of degree
either two or three. If all vertices in $B$ are of degree two, we
obtain a contradiction as in Case 2.  So let $A = \{v_1, v_2, \ldots,
v_5\}$, and let $u\in B$ be a vertex of degree $3$, say $N(u)
=\{v_1,v_2,v_3\}$.  By Lemma~\ref{lem:bipartite1}, there is a vertex
in $B$, say $v$, such that $\{v_4,v_5\}\subseteq N(v)$.
But now, in the bipartite complement of $G$, vertices $u$ and $v$
have no common neighbor.
Hence Lemma~\ref{lem:bipartite1} implies that
$\overline G^{\textrm{bip}}$ is not squco, a contradiction.

\medskip
\noindent{\it Case 4. $a = 6$.}  Now, every vertex of $B$ is of degree
$2$, $3$, or $4$.  Let $A = \{v_1, v_2, \ldots, v_6\}$.  We claim that
every vertex of $B$ is of degree $3$. Indeed, if this were not the
case, then $B$ would contain a vertex $u$ of degree either $2$ or $4$.
If the degree of $u$ is $4$, say $N(u) = \{v_1, v_2, v_3, v_4\}$, then
any common neighbor $v$ of $v_5$ and $v_6$ (which exists by
Lemma~\ref{lem:bipartite1}) would not have any common neighbor with
$u$ in $\overline G^{\textrm{bip}}$, contradicting the fact that
$\overline G^{\textrm{bip}}$ (and hence $G$) is squco.  If the degree
of $u$ is $2$, then in the bipartite complement of $G$, vertex $u$
would be of degree $4$, and the above argument applies, leading to a
contradiction.

By~\eqref{eq-ab}, $b\le 8$.  If $b = 6$, then by the same argument as
above, every vertex of $A$ is of degree $3$, hence $G$ is cubic.  By
Proposition~\ref{thm:8.4.7}, $G$ is isomorphic to the Franklin graph,
see Figure~\ref{fig:FranklinsGraph}.  It is not hard to verify that
the Franklin graph contains a subdivision of $K_{3,3}$, hence it is
not planar.  The case $b = 6$ is thus impossible.

Suppose now that $b = 8$. We have $n = 14$, $m = 24$, which by Euler's
formula implies that every planar embedding of $G$ has exactly $12$
faces. Since in this case we have $m = 2n-4$, $G$ is maximal planar
bipartite and then, as discussed above, in every planar embedding of
$G$ every face is bounded by a $4$-cycle.  Fix a planar embedding of
$G$, and let $G^*$ be the corresponding dual graph.  Then, $G^*$ is a
$4$-regular plane graph with two types of faces: {\it $A$-faces}, that
is, the faces corresponding to vertices from $A$, and {\it $B$-faces},
that is, faces corresponding to vertices in $B$.  Since every vertex
of $B$ is of degree $3$ in $G$, every $B$-face is bounded by a
triangle.  Consider the graph $H$ whose vertices are the $B$-faces of
$G^*$, in which two distinct $B$-faces $f$ and $f'$ are adjacent if
and only if $f$ and $f'$ share a common vertex in $G^*$. Every
$B$-face $f$ is incident with exactly three vertices, each of which
yields a unique face adjacent with $f$ in $H$; since all these three
faces are distinct, $H$ is $3$-regular.  Moreover, the embedding of
$G^*$ gives rise to a natural planar embedding of $H$ in which there
is a bijective correspondence between $A$-faces of $G^*$ and faces of
$H$.  (In fact, $G^*$ is isomorphic to the medial graph of $H$, as
well as to its line graph but we will not need these facts here). By
Lemma~\ref{lem:bipartite1}, every two vertices in $A$ have a common
neighbor in $B$ (in $G$), therefore every two $A$-faces of $G^*$ are
adjacent with a common $B$-face of $G^*$, which, in terms of $H$,
means that every two faces of $H$ are incident with a common vertex.
Since $H$ is cubic, this implies that every two faces of $H$ are
incident with a common edge.  Consequently, the dual graph $H^*$ is a
complete graph of order $6$, which is impossible since $H^*$ is
planar.

We are left with the case $b = 7$. We represent this hypothetical
planar bipartite squco graph $G$ in Figure~\ref{fig:squcoplanar}(a)
and (c) (the thin graphs); vertices in $A$ are represented by squares
and vertices in $B$ are represented by triangles. In this case, $G$
has $n=a+b= 13$ vertices and $m=3b= 21$ edges.  We claim that every
vertex of $A$ is of degree $3$ or $4$. Indeed, if this were not the
case, then $A$ would contain a vertex $u$ of degree $2$ or $5$.  If
the degree of $u$ is $5$, then any common neighbor $v$ of the two
vertices in $B\setminus N(u)$ (which exists by
Lemma~\ref{lem:bipartite1}) would not have any common neighbor with
$u$ in $\overline G^{\textrm{bip}}$, contradicting the fact that
$\overline G^{\textrm{bip}}$ (and hence $G$) is squco.  If the degree
of $u$ is $2$, then in the bipartite complement of $G$ (which is
isomorphic to $G$ since $G$ is squco and by
Theorem~\ref{thm:bipartite}), $u$ would be of degree $2$, and the
above argument applies, leading again to a contradiction.

Since $G\cong \overline G^{\textrm{bip}}$ and the set $A$ is preserved
by any isomorphism from $G$ to its bipartite complement, $A$ contains
exactly $3$ vertices of degree $3$ and exactly $3$ vertices of degree
$4$.

Euler's formula implies that every planar embedding of $G$ has exactly
$21-13+2 =10$ faces and, since $m=2n-5$, $G$ is one edge away from
being maximal planar. Hence exactly one face is bounded by a
$6$-cycle, and each of the other $9$ faces is bounded by a $4$-cycle.
Fix a planar embedding of $G$, and let $G^*$ be the corresponding dual
graph (the thick graphs in Figure~\ref{fig:squcoplanar} (a) and (c)).
Then, $G^*$ is a simple plane graph with two types of faces: {\it
  $A$-faces}, that is, the faces corresponding to vertices from $A$,
and {\it $B$-faces}, that is, faces corresponding to vertices in $B$.
Every $A$-face is bounded by a triangle or by a $4$-cycle, and every
$B$-face is bounded by a triangle.  Graph $G^*$ has a unique vertex of
degree $6$, say $v^*$.  Let $v_1, v_2, \ldots, v_6$ be the cyclic
order of neighbors of $v^*$ in the planar embedding of $G^*$ such that
the face incident with vertices $v_1,v^*,v_2$ is an $A$-face.  Modify
the embedding by replacing the edges of the $2$-path $v_1v^*v_2$ by an
edge $v_1v_2$. This results in a $4$-regular plane multigraph $G'$
with $10$ vertices; its faces can be naturally partitioned into $6$
``$A$-faces'' and $6$ ``$B$-faces'' as in Figure~\ref{fig:squcoplanar}
(b) and (d).

\begin{figure}[ht!]
  \begin{center}
  \resizebox{0.8\textwidth}{!}{\input{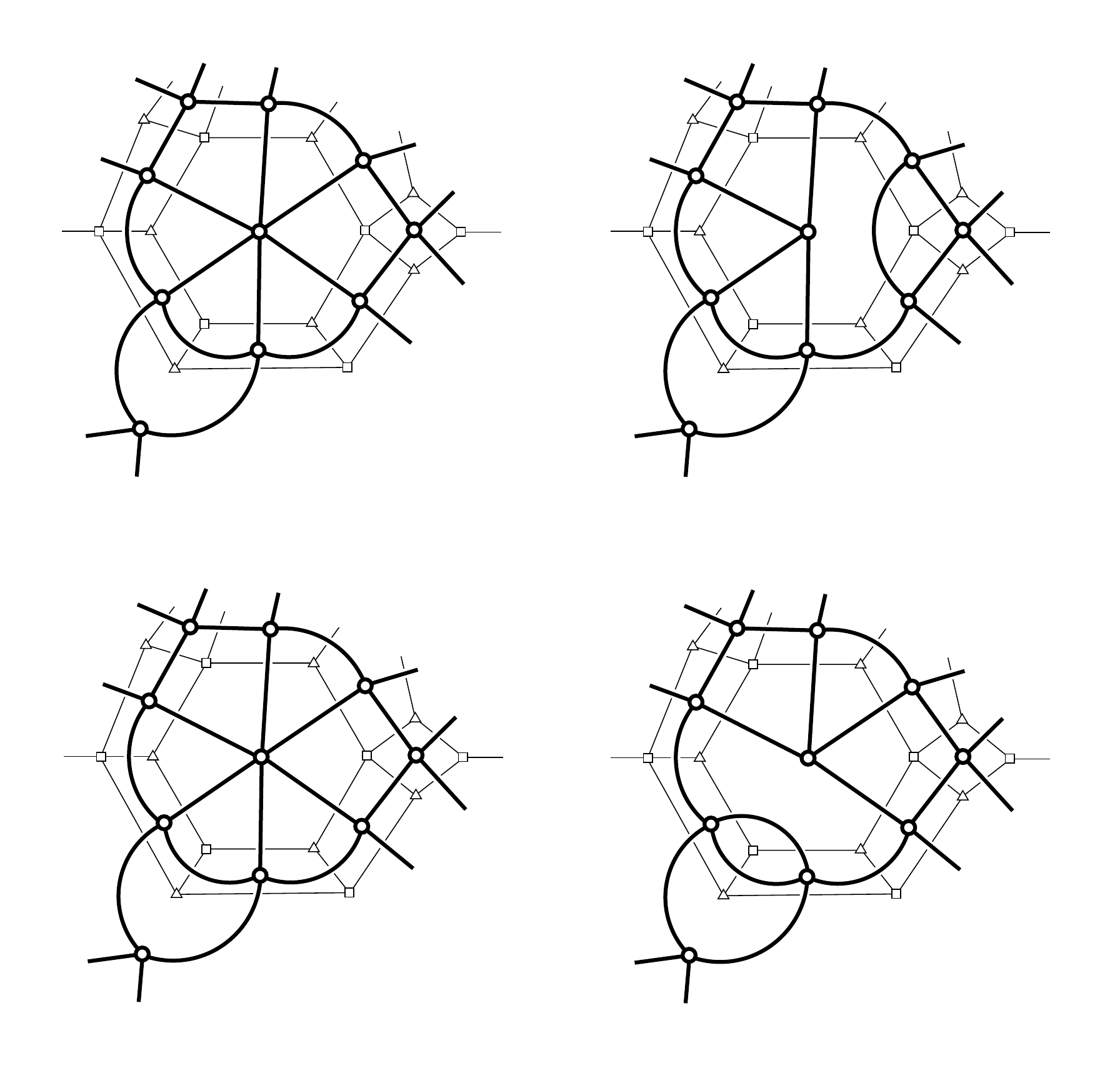_t}}
  \end{center}
  \caption{(a) and (c) A hypothetical planar bipartite squco graph $G$
    with $a=6$ and $b=7$ and its dual $G^*$. (b) and (d) $G$ and the corresponding multigraph $G'$.}
\label{fig:squcoplanar}
\end{figure}

Since every vertex of $B$ is of degree $3$ in $G$, every $B$-face of
$G'$ is bounded by a triangle, except one, say $f^*$, which is bounded
by a $5$-cycle. Consider the multigraph $H$ the vertices of
which are the $B$-faces of $G'$, in which two distinct $B$-faces $f$
and $f'$ are adjacent with $k$ edges where $k$ is the number of
vertices of $G'$ incident to both $f$ and $f'$.  Every $B$-face $f$
other than $f^*$ is incident with exactly three vertices, each of
which yields a unique face adjacent with $f$ in $H$; face $f^*$ is of
degree $5$ in $H$; consequently $f^*$ is incident with either $4$ or
$5$ vertices. Moreover, the embedding of $G'$ gives rise to a natural
planar embedding of $H$ in which there is a bijective correspondence
between the set of $A$-faces of $G'$ and the set of faces of $H$.  By
construction, $G'$ has as many $A$-faces as $G^*$, with the only
difference that one of the $A$-faces has one edge less than the
corresponding $A$-face in $G^*$. It follows that every $A$-face in
$G'$ is bounded by a $k$-cycle for $k\in \{2,3,4\}$, and the same
holds for the faces of $H$.  In particular, no face of $H$ is bounded
by a $5$-cycle. Notice that $|V(H)| = 6$, $H$ has no loops, and $H$
has at most one pair of parallel edges, which, if they exist, are
incident with $f^*$, the vertex of degree $5$ in $H$.

Suppose first that $f^*$ is adjacent to all other vertices of $H$.
The remaining vertices are of degree $3$ in $H$ and hence $H-f^*$, the
graph obtained by deleting vertex $f^*$ from $H$, is a disjoint union
of cycles.  Since $H-f^*$ is a simple graph, it must be a single cycle
of length $5$. Since this cycle misses only one vertex, it is a facial
cycle in every planar embedding of $H$, which is impossible by the
above observation.  It is thus not possible that $f^*$ is adjacent to
all other vertices of $H$; in particular, together with another vertex
$\hat f$, it is part of a $2$-cycle (as in
Figure~\ref{fig:squcoplanar}(d)).  The graph $H-f^*$ is a simple graph
with degree sequence $(3,2,2,2,1)$.  There are only two such graphs: a
triangle with a pendant path of length $2$, and a $4$-cycle with a
pendant edge.  In the former case, every planar embedding of $H$
contains a face bounded by a $5$-cycle, which is impossible.  In the
latter case, every planar embedding of $H$ results in $G^*$ containing
two $A$-faces not adjacent to a common $B$-face of $G^*$, contrary to
the fact that in $G$, every two vertices in $A$ have a common neighbor
in $B$.

This completes the proof.
\end{proof}

\section{Conclusion}

\begin{sloppypar}
The results of the present paper provide further insight on the solution set of the graph equation $G^2\cong \overline{G}$. We showed that, while no solutions can be found among graphs of girth six or among nontrivial planar bipartite graphs, recognizing square-complementary graphs is in general as difficult as the Graph Isomorphism problem. This is a notorious problem that is not known to be either in P or NP-complete, and for which Babai recently announced a quasipolynomial-time algorithm~\cite{Babai}.
\end{sloppypar}

While our work answers three of the open problems on squco graphs posed in~\cite{MilPedPelVer}, several problems on squco graphs mentioned therein remain open. This includes existence of squco graphs within the classes of graphs of girth five, graphs of diameter four, and nontrivial chordal graphs. Furthermore, the proof of Theorem~\ref{SqucoIsIsoComplete} shows that the problem of recognizing squco graphs is graph isomorphism complete within the class of bipartite graphs. This motivates the study of the problem of recognizing squco graphs within subclasses of bipartite graphs for which the graph isomorphism problem is GI-complete, for example for chordal bipartite graphs~\cite{MR2112539}.

\medskip
\medskip
\noindent{\bf Acknowledgments.} This work is supported in part by the Slovenian Research Agency (I0-0035, research program P1-0285 and research projects N1-0032, J1-7051, J1-9110).


\def\soft#1{\leavevmode\setbox0=\hbox{h}\dimen7=\ht0\advance \dimen7
  by-1ex\relax\if t#1\relax\rlap{\raise.6\dimen7
  \hbox{\kern.3ex\char'47}}#1\relax\else\if T#1\relax
  \rlap{\raise.5\dimen7\hbox{\kern1.3ex\char'47}}#1\relax \else\if
  d#1\relax\rlap{\raise.5\dimen7\hbox{\kern.9ex \char'47}}#1\relax\else\if
  D#1\relax\rlap{\raise.5\dimen7 \hbox{\kern1.4ex\char'47}}#1\relax\else\if
  l#1\relax \rlap{\raise.5\dimen7\hbox{\kern.4ex\char'47}}#1\relax \else\if
  L#1\relax\rlap{\raise.5\dimen7\hbox{\kern.7ex
  \char'47}}#1\relax\else\message{accent \string\soft \space #1 not
  defined!}#1\relax\fi\fi\fi\fi\fi\fi}

\end{document}